\newtheorem*{thW}{Weierstrass Theorem}
\newtheorem{proposition}{Proposition} 
\newcommand{\RR}{\mathbb{R}} 
\newcommand{\CC}{\mathbb{C}} 
\newcommand{\NN}{\mathbb{N}}
\DeclareMathOperator{\sbh}{{\sf sbh}} 
\DeclareMathOperator{\supp}{supp} 
\DeclareMathOperator{\Meas}{{\sf Meas}} 
\DeclareMathOperator{\dd}{\,{\rm d}}
\DeclareMathOperator{\dist}{{\sf dist}} 
\DeclareMathOperator{\clos}{{\sf clos}} 
\DeclareMathOperator{\intr}{{\sf int}}
\DeclareMathOperator{\har}{{\sf har}} 
\DeclareMathOperator{\Hol}{{\sf Hol}} 
\DeclareMathOperator{\Zero}{{\sf Zero}}
\DeclareMathOperator{\comp}{cmp}
\DeclareMathOperator{\up}{\text{\rm \tiny up}}
 \DeclareMathOperator{\lw}{\text{\rm \tiny low}}
\begin{document}
\titlerunning{Necessary and sufficient conditions for zero subsets}
\authorrunning{B. N. Khabibullin, F. B. Khabibullin}

\title{Necessary and sufficient conditions\\ for zero subsets  of holomorphic functions}

\author{\firstname{B. N.}~\surname{Khabibullin}}
\email[E-mail: ]{khabib-bulat@mail.ru } 
\affiliation{Bashkir State University, Zaki Validi Str. 32, Bashkortostan, 420076 Russian Federation}

\author{\firstname{F. B.}~\surname{Khabibullin}}
\email[E-mail: ]{khabibullinfb@mail.ru} 
\affiliation{Ufa State Aviation Technical University, Karl Marx Str. 12, Bashkortostan, 420008 Russian Federation}


\received{December 14, 2020}

\begin{abstract} Let $D$ be a domain in the complex plane,  $M$ be an extended real function on $D$. 
If $f$ is a non-zero holomorphic  function on $D$ with an upper constraint $|f|\leq \exp M$ on this domain $D$, 
 then it is natural to expect that there must be some upper constraints on the distribution of zeros of this holomorphic function
exclusively in terms of the function $M$ and the geometry of the domain $D$.  We have investigated this question in detail in our previous works in the case when  $M$ is a subharmonic function and the domain $D$ is arbitrary or with a non-polar boundary. 
 The answer was given in terms of limiting the distribution of zeros of $f$ from above via the Riesz measure of the subharmonic function $M$. In this article, the function $M$ is the difference of subharmonic functions, or a $\delta$-subharmonic function, 
and the upper constraints are given in terms of the Riesz charge of this $\delta$-subharmonic function $M$. 
These results are also new to a certain extent for the subharmonic function $M$. 
 The case when the domain D is the complex plane is considered separately.  For the complex plane, it is possible to reach the criterion level. 

\end{abstract}

\subclass{30C15, 31A05, 30D20}

\keywords
{\it  holomorphic function, zero set, entire function, subharmonic function, Riesz measure, $\delta$-subharmonic function, Jensen measure}

\maketitle

\section{Introduction}\label{s10}

 As usual, $\mathbb N:=\{1,2, \dots\}$,  $\RR$, ${\mathbb{R}}^+:=\bigl\{x\in \RR \bigm| x\geq 0 \bigr\}$, $\CC$ are  sets of all {\it natural, real, positive, complex\/} numbers, respectively. 
We often denote singleton sets $\{x\}$ by its inner symbol $x$ without curly braces.
So,  $\NN_0:=0\cup \NN$, and  ${\mathbb{R}}^+\setminus 0$ is the set of all {\it strictly positive numbers,\/}
$\overline \RR:=-\infty\cup \RR\cup +\infty$ is the {\it extended real line,\/} $\overline \RR^+:=\RR^+\cup +\infty$, and 
$\CC_{\infty}:=\CC\cup \infty$ is the  {\it extended complex plane.\/}
Each this set is endowed with its natural order, algebraic, geometric and topological structure with modulus $|z|$ and conjugate number $\Bar z$ for $z\in \CC$, and $\inf \varnothing:=+\infty:=\sup \RR$, $\sup \varnothing:=-\infty:=\inf  \RR$ for the {\it empty set\/} $\varnothing$. 
For a subset $S\subset \CC_{\infty}$, $\Hol(S)$ is the algebra over $\CC$ of all {\it holomorphic functions\/} $f$ on an open subset $O_f\supset S$  in $\CC_{\infty}$. So,  $\Hol(\CC)$ is the algebra of all {\it entire functions.\/}

Let $S$ be a set of points,  $J$ be a {\it index set,\/} ${\sf Z}:=\{{\sf z}_{   j}\}_{   j\in J}$ be  {\it indexed  set\/}
of points ${\sf z}_j\in S$. These indexed sets will be called the {\it distribution of points\/} in $S$. 
We write  $z\in {\sf Z}$ if there is ${\sf z}_j=z$, and ${\sf Z}\subset S'\subset S$ if  ${\sf z}_ j\in
 S'$ for each $ j\in J$.  
The {\it counting function\/} $n_{\sf Z}\colon S\to \overline \NN$ of $\sf Z$
 is defined by
\begin{subequations}\label{n}
\begin{align}
n_{\sf Z}(z)&:=
\sum_{{\sf z}_ j=z}1 \in \overline \NN_0:=\NN_0\cup+\infty\quad\text{at each $z\in S$.}
\tag{\ref{n}z}\label{df:divz}\\
\intertext{We denote by the same symbol $n_{\sf Z}$ the \textit{counting measure\/}  $n_{\sf Z}\colon 2^S\to \overline \NN_0$ of ${\sf Z}$ defined by}
n_{\sf Z}(S')&\overset{\eqref{df:divz}}{:=}\sum_{{\sf z}_ j\in S'}1 =
\sum_{z\in S'}n_{\sf Z}(z)\in 
\overline \NN\quad\text { for each $S'\subset S$.}
\tag{\ref{n}n}\label{df:divmn}
\end{align}
\end{subequations}
We say that distributions ${\sf Z}$ and ${\sf Z'}$ of points in $S$ are equal and write ${\sf Z}={\sf Z'}$ if $n_{\sf Z}=n_{{\sf Z}'}$ on $S$.  Thus, both the counting function and the counting measure uniquely determine distributions of points in $S$. We write ${\sf Z}\subset {\sf Z}'$ if $n_{\sf Z}\leq n_{{\sf Z}'}$ on $S$.

Let $O\neq \varnothing$ be an open subset in $\CC$. The {\it zero set\/}  of $f\in \Hol(O)$ is a distribution   $\Zero_f$ of points in $O$ with  the  {\it counting function\/} of the {\it multiplicity of zeros\/} of $f$ defined by
\begin{equation*}
n_{\Zero_f}(z)
\underset{\text{\tiny $z\in O$}}{:=} \sup\left\{p\in \NN_0\Bigm| \limsup_{z\neq z'\to z}\frac{\bigl|f(z)\bigr|}{|z' -z|^p}<+\infty\right\}. 
\end{equation*} 
So, for \textit{the zero function\/} $0\in \Hol(O)$, we have  $n_{\Zero_0}(z)\overset{\eqref{df:divz}}{=}+\infty$ for each $z\in  O$ and $n_{\Zero_0}(S)\overset{\eqref{df:divmn}}{=}+\infty$ for each $S\subset   O$.
The counting function $n_{\Zero_f}$ of $f\in \Hol(O)$ is often called the {\it divisor of zeros\/} of $f$.
If ${\sf Z}\subset \Zero_f$, then we write $f({\sf Z})=0$ and say that  ${\sf Z}$ is a {\it zero subset\/} of $f$ and
$f$ \textit{vanishes on\/}  ${\sf Z}$.

For a subset $S\subset \CC_{\infty}$ we let 
$\complement S:=\CC_{\infty}\setminus S$, 
$\clos S$, $\intr S:=\complement (\clos \complement  S)$, and $\partial S:=\clos S\setminus\intr S$ denote its
 {\it complement,\/} {\it closure,} {\it interior,} and {\it boundary\/}  always in $\CC_{\infty}$.
We write $S\Subset O$ if $\clos S\subset O$. A distribution of points ${\sf Z}\subset O$ is {\it local finite\/} if
$n_{\sf Z}(S)<+\infty$ for each $S\Subset O$.

\begin{thW} Let ${\sf Z}$ be a distribution of points in an open set $O\subset \CC$.
  The following three statements are equivalent:
\begin{itemize}
\item  There is  $f\in \Hol(O)$ such that  $f\neq 0$ and  ${\sf Z}=\Zero_f$.
\item  There is  $f\in \Hol(O)$ such that  $f\neq 0$ and  $f({\sf Z})=0$.
\item  $\sf Z$ is local finite.
\end{itemize}
\end{thW}

Throughout this article, we consider only locally finite distributions ${\sf Z}$ of points in an open connected subset $D\subset \CC$,
i.e. $D$ is a {\it domain} in the complex plane $\CC$.

If an additional constraint $\ln |f|\leq M$ on $D$ is imposed on a holomorphic function $f\neq 0$, where $M\colon D\to \overline \RR$ is an {\it extended real function,\/} then the problem of describing zero sets and zero subsets becomes much more complicated. In particular, zero sets are very often not the same as zero subsets.  In our paper, we consider only zero subsets under a restriction from above of the form $\ln |f|\leq M$, when $M$ is the difference of subharmonic functions with the Riesz charge  $\varDelta_M$.  For subharmonic functions $M$ \cite{Rans}--\cite{Helms}, this question was considered earlier in our series of works \cite{KhaRoz18}--\cite{MenKha20}. In this article, we consider the difference  $M=M_{\up}-M_{\lw}$ of subharmonic functions  
$M_{\up}$ and $M_{\lw}$ with the Riesz measures $\varDelta_{M_{\up}}$ of $M_{\up}$
and $\varDelta_{M_{\lw}}$ of $M_{\lw}$ and with the Riesz charge $\varDelta_M:=\varDelta_{M_{\up}}-\varDelta_{M_{\lw}}$ of $M$. It is shown that if $\ln |f|\leq M$, $f\neq 0$ and $f({\sf Z})=0$, then there is a number $C\in\RR^+$ such that 
\begin{equation}\label{1}
\sum_{{\sf z}_j}v({\sf z}_j)\leq \int v \dd \varDelta_M+C 
\end{equation} 
when  $v$  runs through a very wide class of  test positive functions.  These are the necessary conditions from Sec. \ref{Sec2}, Theorem \ref{thn}. Conversely, if \eqref{1} is satisfied for much more narrower subclasses of test positive smooth subharmonic 
functions $v$, then we have almost converse statements in the form 
$\ln |f|\leq M^{\odot}+R$, where $M^{\odot}$ are  certain averaging of $M$ over small disks,   
and $R$ is a small addition related to the distance to  the boundary $\partial D$ of $D$. 
These are sufficient conditions from Sec. \ref{Sec3}, Theorem \ref{th0}.
 In Sec.~\ref{Sec3}, we consider domains $D$ with  {\it non-polar} boundary  $\partial D$,  or, equivalently, 
with non-polar complement\/ $\CC\setminus D$. This very broad class of domains $D$ includes most of the  considered in function theory and its applications. So, if $\CC\setminus D$ contains a  connected subset with more than one point, then the boundary $\partial D$ is non-polar \cite[Corollary 3.6.4]{Rans}, \cite[Theorem 5.12]{Helms}. 
But  the boundary $\partial \CC=\infty\in \CC_{\infty}$ of $\CC$ is polar.  
For the case of $D=\CC$ in Sec.~\ref{Sec4}, Theorem \ref{th3}, we obtain a criterion for a wide class of functions $M$ 
under a single condition: {\it there are numbers $P,C\in \RR^+$ such that\/}
\begin{equation}\label{M0}
\int_0^{2\pi} M_{\up}\Bigl(z+\frac{1}{(1+|z|)^P}e^{i\theta}\Bigr)\dd \theta\leq M_{\up}(z)+C
\quad\text{for each $z\in \CC$.}
\end{equation}

\section{Necessary conditions for zero subsets  of holomorphic functions  with upper constraints in domain}\label{Sec2}

\subsection{Subharmonic functions and measures}

Let  $\sbh(S)$ be the cone over $\RR^+$ of all {\it subharmonic functions\/} $u$ on open neighborhoods of $S\subset \CC_{\infty}$ including  $(-\infty)$-function $\boldsymbol{-\infty}\colon z\underset{z\in S}{\longmapsto} -\infty$, 
and $\har (S):=\sbh(S)\cap\bigl(-\sbh(S)\bigr)$   be the space over $\RR$ 
 of all {\it harmonic functions\/} $u$ on $S$. We denote by
\begin{equation*}
\sbh_*( D):=\bigl\{u\in \sbh( D)\bigm| u\neq\boldsymbol{-\infty}\bigr\}
\end{equation*}
the class of all {\it nontrivial\/} subharmonic functions on a domain  $D\subset \CC$.
 
If  $u\in \sbh_*( D)$, then its {\it Riesz measure\/}  is denoted by
\begin{equation*}
\varDelta_u:= \frac{1}{2\pi}{\bigtriangleup}  u\in \Meas^+(D), 
\end{equation*}
where ${\bigtriangleup}$  is  the {\it Laplace operator\/}  acting in the sense of the  theory of distributions or generalized functions, and $\Meas^+(D)$ is the cone over $\RR^+$ of {\it positive Radon measures\/} on $D$. 
But by definition, the Riesz measure of $(-\infty)$-function $\boldsymbol{-\infty}$ on $D$  is such that
$\varDelta_{\boldsymbol{-\infty}}(S)=+\infty$ for  each $S\subset  D$. 

If $f\in \Hol (D)$, then $\ln |f|\in \sbh(D)$ and 
$n_{\Zero_f}=\varDelta_{\ln |f|}$ \cite[3.7.8]{Rans}.

Given  $f\in F\subset {\overline{\RR}}^X $, we set $f^+\colon x\mapsto \max \{0,f(x)\}$, and  $F^+:=\{f\in F \colon f=f^+\}$.
For a  sequence  $(f_k)_{k\in \NN}\subset \overline \RR^X$, we write $f_k\underset{k\to \infty}{\nearrow} f$ if
this sequence $(f_k)_{k\in \NN}$ is  increasing and
$f=\lim\limits_{k\to \infty}f_k$;    
\begin{equation}\label{Fuparrow}
F^{\uparrow}:=\biggl\{ f\in \overline{\RR}^X \Bigm| \;\exists (f_k)\subset F, \;  f_k\underset{k\to \infty}{\nearrow} f\biggr\}.
\end{equation}
So, $\sbh^+(S):=\bigl(\sbh(S)\bigr)^+$ is the class of all positive subharmonic functions on $S$.
Let $S\Subset D$ be a closed subset.   We define 
 classes  \cite[Definition 1]{KhaRoz18}, \cite{KhaKha19} 
\begin{equation}\label{v}
\sbh_0^+(D\setminus S; \leq b):=\Bigl\{v\in \sbh^+(D\setminus S)\Bigm|
 v\leq b \text{ on  $D\setminus S$},\quad 
\limsup_{D\ni z'\to z} v(z')\underset{\text{\tiny $z\in \partial D$}}{=}0\Bigr\}.
\end{equation}
of  {\it  test subharmonic positive functions $v$ for $D$ outside of $S$  with an upper bound of $b\in \RR^+$,}
and 
\begin{equation*}
\sbh_0^{+\uparrow}(D\setminus S; \leq b)\overset{\eqref{Fuparrow}}{:=}\bigl(\sbh_0^+(D\setminus S; \leq b)\bigr)^{\uparrow}
\end{equation*}
of  {\it test upper positive  functions $v$ for $D$ outside of $S$  with an upper bound of $b\in \RR^+$.\/}

The difference of two nontrivial subharmonic functions is called a {\it nontrivial $d$-subharmonic function,\/} or a 
{\it nontrivial $\delta$-subharmonic function.}  A {\it $d$-subharmonic function\/} \cite{Arsove53}, \cite[3.1]{KhaRoz18}, \cite{Az}
\begin{equation}\label{M}
M:=M_{\text{\tiny \rm up}}-M_{\text{\tiny \rm low}}, \quad M_{\text{\tiny \rm up}}\in \sbh_*(D), 
\quad M_{\text{\tiny \rm low}}\in \sbh_*(D),
\quad \varDelta_M:=\varDelta_{M_{\text{\tiny \rm up}}}-\varDelta_{M_{\text{\tiny \rm low}}},
\end{equation}
with the { \it  Riesz charge\/} $\varDelta_M$ of $M$ is 
defined at each point $z\in D$ where  $M_{\text{\tiny \rm low}}(z)\neq -\infty$. 
Below we put $M(z):=+\infty$ if  $M_{\text{\tiny \rm low}}(z)= -\infty$. 

 For a Borel subset $S\subset \CC$, we denote by $\Meas(S)$  the class of all Borel signed measures, or \textit{charges,\/} on $S$.   $\Meas_{\comp}(S)$ is the class of charges $\mu \in \Meas(S)$ with a compact support $\supp \mu \Subset S$; 
\begin{equation*}
\Meas^+(S):=( \Meas (S))^+, \quad \Meas_{\comp}^+(S):=( \Meas_{\comp} (S))^+.  
\end{equation*}
For a charge $\mu \in \Meas(S)$, we let
$\mu^+$, $\mu^-:=(-\mu)^+$ and $|\mu| := \mu^+ +\mu^-$ respectively denote its {\it upper, lower,\/} and {\it total variations.}
A function $f\colon S\to \overline \RR$ is {\it $\mu$-integrable\/}  if there are four integrals $\int f^{\pm}\dd\mu^{\pm} \in \overline \RR^+$
such that  $\int f^{+}\dd\mu^{+}+\int f^{-}\dd\mu^{-}<+\infty$ for $\int f^{-}\dd\mu^{+}+\int f^{+}\dd\mu^{-}=+\infty$, and vice versa, $\int f^{-}\dd\mu^{+}+\int f^{+}\dd\mu^{-}<+\infty$ for  $\int f^{+}\dd\mu^{+}+\int f^{-}\dd\mu^{-}=+\infty$. 
A $\mu$-integrable function $f\colon S\to \overline \RR$ is {\it $\mu$-summable\/}  if  $\int|f|\dd|\mu| \neq +\infty$. 

\subsection{Jensen measures and its potentials} 

A measure $\mu \in \Meas_{\comp}^+(D)$ is a {\it Jensen measure for domain $D\subset D$ at $z_0\in D$\/} if \cite{Kha03}, 
\cite{KhaRoz18}
\begin{equation}\label{Jm}
u(z_0)\leq \int u\dd \mu\quad\text{for each $z_0\in \sbh(D)$}.
\end{equation} 
We denote by ${\sf J}_{z_0}(D)$ the class of all these Jensen measures. Obviously,  $\mu(D)=1$  for every $\mu\in {\sf J}_{z_0}(D)$.
For $\mu\in {\sf J}_{z_0}(D)$, the function   
\begin{equation}\label{Vmeas}
V_{\mu}\colon z\longmapsto \int \ln |z'-z|\dd \mu-\ln |z| , \quad z\in \CC\setminus z_0,\quad V(\infty):=0,
\end{equation}
is the {\it logarithmic potential of $\mu\in {\sf J}_{z_0}(D)$ with pole $z_0$ for $D$.} 

A positive subharmonic function $V\in \sbh^+(\CC_{\infty}\setminus z_0)$ 
is a {\it Jensen potential for $D$ with pole $z_0\in D$\/} if there is a subset $S_V\Subset D$ 
such that $V(z)=0$ for each  $z\in D\setminus S_V$ and
  \begin{equation*}
\limsup_{z_0\neq z\to z_0}\frac{V(z)}{-\ln |z-z_0|}\leq 1.
\end{equation*}
We denote by ${\sf PJ}_{z_0}(D)$ the class of all these Jensen potentials. 
If $D'\Subset D$ be a subdomain in domain $D\subset \CC$, then  its  {\it Green's function\/} 
${\sf g}_{D'}(\cdot, z_0)$ with pole $z_0\in D'$ belongs to ${\sf PJ}_{z_0}(D)$.

\begin{proposition}[{\cite[Propositions 1.2, 1.4]{Kha03}}]\label{pr1} Let $D\neq \varnothing$ be a domain  in $\CC$ and $z_0\in D$. 
\begin{enumerate}[{\rm (i)}]
\item\label{im}    The mapping 
$\mathcal P\colon \mu \overset{\eqref{Vmeas}}{\longmapsto} V_{\mu} $ is an affine  bijection from ${\sf J}_{z_0}(D)$ on\/ ${\sf PJ}_{z_0}(D)$, and 
\begin{equation}\label{P-1}
\mathcal P^{-1}(V)=\varDelta_V\bigm|_{\CC\setminus z_0}+\biggl(1-\limsup_{z_0\neq z\to z_0}\frac{V(z)}{-\ln |z-z_0|}\biggr)\delta_{z_0} , 
\quad V\in {\sf PJ}_{z_0}(D).
\end{equation}
where $\delta_{z_0}$ is the  {\it Dirac probability measure\/} with the {\it support\/} $\supp \delta_{z_0}=z_0$.
 \item\label{iim} If $\mu\in {\sf J}_{z_0}(D)$, then the following Poisson\,--\,Jensen formula holds:
 \begin{equation}\label{PJf}
u(z_0)=\int_{D\setminus z_0}  u\dd \mu-\int_DV_{\mu}\dd \varDelta_u
\quad\text{for each $u\in \sbh(D)$ with $u(z_0)\neq -\infty$.}
\end{equation} 
\end{enumerate}
\end{proposition}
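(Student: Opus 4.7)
My plan is to establish the bijection in (i) direction-by-direction, then obtain the Poisson--Jensen formula (ii) via Riesz decomposition combined with the Jensen-measure identity already in hand.

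For the forward direction of (i), fix $\mu\in{\sf J}_{z_0}(D)$. Positivity $V_{\mu}\geq 0$ on $\CC\setminus z_0$ is immediate from \eqref{Jm} applied to the subharmonic test function $z'\mapsto\ln|z'-z|\in\sbh(D)$ for each fixed $z$. Distributional differentiation of \eqref{Vmeas} identifies the Riesz measure of $V_{\mu}$ on $\CC\setminus z_0$ with the restriction of $\mu$ there, and $V_{\mu}(z)\to 0$ as $|z|\to\infty$ by cancellation of leading logarithmic terms, so $V_{\mu}\in\sbh^+(\CC_{\infty}\setminus z_0)$. For the vanishing $V_{\mu}\equiv 0$ on $D\setminus S_{V_{\mu}}$ with $S_{V_{\mu}}\Subset D$, I apply \eqref{Jm} to both $\pm\ln|z'-z|$ when $z\in\complement\clos D$ (where this function is harmonic on $D$), which forces equality and hence $V_{\mu}(z)=0$ there; a local expansion at $z_0$ produces $V_{\mu}(z)=-(1-\mu(\{z_0\}))\ln|z-z_0|+O(1)$, giving simultaneously $\limsup\leq 1$ and the coefficient appearing in \eqref{P-1}.

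For the backward direction, given $V\in{\sf PJ}_{z_0}(D)$, let $\mu$ be the measure defined by the right-hand side of \eqref{P-1}: positivity follows from $\limsup\leq 1$, compact support lies in $S_V\cup\{z_0\}\Subset D$, and total mass equals $1$ using $V(\infty)=0$. The Riesz representation of $V$ on $\CC_{\infty}\setminus z_0$, together with the prescribed logarithmic behaviour at $z_0$ and vanishing at $\infty$, identifies $V$ with $V_{\mu}$. To verify $\mu\in{\sf J}_{z_0}(D)$, I fix a smooth $u\in\sbh(D)$ (the general case by monotone regularisation) and apply Green's identity on $D'\Supset S_V\cup\{z_0\}$ with $D'\Subset D$, using the distributional relation $\tfrac{1}{2\pi}\bigtriangleup V_{\mu}=\mu-\delta_{z_0}$:
\begin{equation*}
\int u\dd\mu-u(z_0)=\tfrac{1}{2\pi}\int u\bigtriangleup V_{\mu}=\tfrac{1}{2\pi}\int V_{\mu}\bigtriangleup u=\int V_{\mu}\dd\varDelta_u\geq 0,
\end{equation*}
the boundary terms in integration by parts vanishing because $V_{\mu}\equiv 0$ near $\partial D'$, and the final inequality using $V_{\mu}\geq 0$ with $\varDelta_u\in\Meas^+$. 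Affinity of $\mathcal P$ is immediate from $\mu$-linearity of \eqref{Vmeas}, and uniqueness from the explicit inverse \eqref{P-1}.

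For (ii) the same calculation upgrades from inequality to equality: with $\mu\in{\sf J}_{z_0}(D)$ and $u\in\sbh(D)$ satisfying $u(z_0)\neq-\infty$, Green's identity for smooth $u$ produces $\int u\dd\mu-u(z_0)=\int V_{\mu}\dd\varDelta_u$, and the $D\setminus z_0$ restriction in \eqref{PJf} arises from carefully excising a shrinking disk $B(z_0,r)$ about $z_0$: the boundary integral on $\partial B(z_0,r)$ extracts the $u(z_0)$ term in the limit through the logarithmic singularity of $V_{\mu}$, with any atomic contributions at $z_0$ accounted for by the coefficient in \eqref{P-1}. Passage from smooth to general $u\in\sbh(D)$ goes through convolution regularisation $u_{\varepsilon}\searrow u$ with $u_{\varepsilon}\in C^{\infty}\cap\sbh$ and weak-$*$ convergence of $\varDelta_{u_{\varepsilon}}$ to $\varDelta_u$ on compact subsets of $D$. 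The main obstacle is justifying the integration-by-parts and Fubini exchanges near $z_0$ when $u$ assumes $-\infty$ on a polar set; the hypothesis $u(z_0)\neq-\infty$, combined with local $\mu$-summability of $z\mapsto\ln|z-z'|$ uniformly for $z'\in\supp\mu\Subset D$, supplies the required domination.
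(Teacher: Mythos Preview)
The paper does not prove this proposition in-text; it is quoted with the citation \cite[Propositions 1.2, 1.4]{Kha03} and used as a tool. So there is no ``paper's own proof'' to compare against, and your sketch must stand on its own merits.

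Your outline contains the right ingredients, but there is a genuine gap in the forward direction of (i). You argue that $V_{\mu}(z)=0$ for $z\in\complement\clos D$ by applying the Jensen inequality \eqref{Jm} to both $\pm\ln|\cdot-z|$, which are harmonic on $D$ for such $z$. That is correct as far as it goes, but it is \emph{not} the conclusion required: membership in ${\sf PJ}_{z_0}(D)$ demands that $V_{\mu}$ vanish on $D\setminus S_{V_{\mu}}$ for some compact $S_{V_{\mu}}\Subset D$, i.e.\ \emph{inside} $D$ near its boundary, not merely on the exterior $\complement\clos D$. In the central case $D=\CC$ (explicitly allowed here and the focus of Sec.~\ref{Sec4}), your argument yields only $V_{\mu}(\infty)=0$ and nothing more.

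What is missing is a minimum-principle step. Since $V_{\mu}\geq 0$ is harmonic on $\CC_{\infty}\setminus K$ with $K:=\supp\mu\cup\{z_0\}\Subset D$ and $V_{\mu}(\infty)=0$, the strong minimum principle forces $V_{\mu}\equiv 0$ on the component of $\CC_{\infty}\setminus K$ containing $\infty$. Each bounded component $H$ of $\CC\setminus K$ is either contained in $D$ or contained in $\complement D$ (the two cases are mutually exclusive by connectedness of $H$, since $H\cap D$ is open and $H\cap\complement D$ is relatively closed in $H$); in the latter case $H\subset\complement\clos D$ and your Jensen-equality argument plus the minimum principle give $V_{\mu}\equiv 0$ on $H$. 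Hence the support of $V_{\mu}$ lies in $K$ together with those bounded components $H\subset D$, a compact set $\Subset D$. With this correction the rest of your argument for (i) and your Green's-identity derivation of (ii) are sound as a sketch.
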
  
We denote by ${\sf PJ}_{z_0}^{\uparrow}(D):=\bigl({\sf PJ}_{z_0}(D)\bigr)^{\uparrow}$ the class of
all {\it test Jensen functions.\/} 

If $D\subset \CC$ is a domain with non-polar boundary $\partial D$, then there is its  {\it Green's function\/} 
${\sf g}_D(\cdot, z_0)$ with pole $z_0\in D$, and ${\sf g}_D(\cdot, z_0)$  is 
the largest Jensen test function in ${\sf PJ}_{z_0}^{\uparrow}(D)$. 
If the boundary $\partial D$ is polar, then the largest test Jensen function in ${\sf PJ}_{z_0}^{\uparrow}(D)$ is 
the $(+\infty)$-function $\boldsymbol{+\infty}\colon z\underset{z\in D}{\longmapsto} +\infty$.
   
\subsection{Main result on necessary conditions for zero subsets in domains}

The main task of this section is to establish the largest possible range of necessary conditions for the distribution of zero subsets of holomorphic functions  $f\in \Hol(D)$ satisfying the upper constraint $\ln |f|\leq M$ on $D$. 
We establish these conditions for arbitrary domains in $\CC$ and arbitrary $d$-subharmonic majorants $M$ from \eqref{M}.

\begin{theorem}[{necessary conditions}]\label{thn} 
Let ${\sf Z}$ be a locally finite distribution of points in a domain $D\subset \CC$
and  let $M$ be a function \eqref{M}. 
Suppose that 
there exists a function $f\in \Hol(D)$ such that $f\neq 0$, $f({\sf Z})=0$ and\/  
\begin{equation}\label{q}
\ln \bigl|f(z)\bigr| \leq M(z) \quad\text{at  each $z\in D$}.
\end{equation}
Then, for any closed set $S\Subset D$  with\/ $\intr  S\neq \varnothing$ and for any $b\in \RR^+$, there is a number  $C\in \RR^+$ such that, 
for each test upper positive  function $v\overset{\eqref{v}}{\in} \sbh_0^{+\uparrow}(D\setminus S; \leq b)$, we have 
\begin{equation}\label{V}
\sum_{{\sf z}_j\in D\setminus S}v({\sf z}_j)\leq  \int_{D \setminus S}v\dd \varDelta_M +C
\quad\text{provided $v$ is $\varDelta_M$-summable on $D\setminus S$.}
\end{equation} 
If $z_0 \in D$, $f(z_0)\neq 0$ and $M_{\up}(z_0)+ M_{\lw}(z_0)\neq -\infty$,
then there is  $C\in \RR$ such that  \eqref{V} holds with the singleton $S:=\{z_0\}$ 
for each test  Jensen function $v\in {\sf PJ}_{z_0}^{\uparrow}(D)$.    
\end{theorem}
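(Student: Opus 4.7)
The plan is to reduce the $d$-subharmonic majorant case to the subharmonic majorant case already established in \cite{KhaRoz18, KhaKha19, MenKha20}, and to treat the Jensen-potential case directly via Proposition \ref{pr1}(\ref{iim}).

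For the first assertion, I would rewrite \eqref{q} as the subharmonic-versus-subharmonic bound
\[
u:=\ln|f|+M_{\lw}\;\leq\;M_{\up}\quad\text{on } D.
\]
Since $f\neq 0$ and $M_{\lw}\in\sbh_*(D)$, the sum $u$ is a nontrivial subharmonic function on $D$ with Riesz measure $\varDelta_u=n_{\Zero_f}+\varDelta_{M_{\lw}}$. Applying the necessary conditions for the \emph{subharmonic} majorant case to the pair $(u,M_{\up})$ produces a constant $C\in\RR^+$, depending only on $S$ and $b$, such that
\[
\int_{D\setminus S}v\dd\varDelta_u\;\leq\;\int_{D\setminus S}v\dd\varDelta_{M_{\up}}+C
\]
for every $v\in\sbh_0^{+\uparrow}(D\setminus S;\leq b)$. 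Substituting $\varDelta_u=n_{\Zero_f}+\varDelta_{M_{\lw}}$, transferring the $\varDelta_{M_{\lw}}$-integral to the right-hand side, and using $n_{\sf Z}\leq n_{\Zero_f}$ together with $v\geq 0$ immediately yields \eqref{V}.

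For the singleton case $S=\{z_0\}$ the first part does not apply, since $\intr\{z_0\}=\varnothing$, and I would instead use \eqref{PJf} directly. For any Jensen measure $\mu\in{\sf J}_{z_0}(D)$ with associated potential $V=V_\mu\in{\sf PJ}_{z_0}(D)$, apply \eqref{PJf} separately to each of $\ln|f|$, $M_{\up}$, $M_{\lw}$, all three being finite at $z_0$ by the hypotheses $f(z_0)\neq 0$ and $M_{\up}(z_0)+M_{\lw}(z_0)\neq-\infty$. Integrating the pointwise bound $\ln|f|+M_{\lw}\leq M_{\up}$ against the positive measure $\mu$ on $D\setminus z_0$ and eliminating the three boundary integrals via the Poisson--Jensen identities produces
\[
\int_D V\dd n_{\Zero_f}\;\leq\;\int_D V\dd\varDelta_M+\bigl(M(z_0)-\ln|f(z_0)|\bigr).
\]
Combined with $n_{\sf Z}\leq n_{\Zero_f}$ and $V\geq 0$, this is \eqref{V} with $S=\{z_0\}$ and the explicit constant $C:=M(z_0)-\ln|f(z_0)|\in\RR$. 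The passage from ${\sf PJ}_{z_0}(D)$ to ${\sf PJ}_{z_0}^{\uparrow}(D)$ then follows by monotone convergence applied separately to the three positive measures $n_{\Zero_f}$, $\varDelta_{M_{\up}}$, $\varDelta_{M_{\lw}}$.

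The main technical obstacle will be keeping all Lebesgue--Stieltjes integrals well-defined during the $d$-subharmonic manipulations and avoiding $\infty-\infty$ indeterminacies when splitting integration against the signed charge $\varDelta_M$ into its $\varDelta_{M_{\up}}$- and $\varDelta_{M_{\lw}}$-components. In the Jensen step this is automatic: the finiteness of $M_{\up}(z_0)$ and $M_{\lw}(z_0)$, together with $\mu\in{\sf J}_{z_0}(D)$ being a probability measure, forces both $\int V\dd\varDelta_{M_{\up}}$ and $\int V\dd\varDelta_{M_{\lw}}$ to be finite via \eqref{PJf} itself. In the domain step, the potential mismatch between the $\varDelta_M$-summability clause of \eqref{V} (a Jordan-decomposition condition) and the $\varDelta_{M_{\up}}$-integrability required by the cited subharmonic result may necessitate a truncation or exhaustion argument to handle the cancelling parts of $\varDelta_{M_{\up}}$ and $\varDelta_{M_{\lw}}$.
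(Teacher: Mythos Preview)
Your proposal is correct and follows essentially the same approach as the paper's proof: both parts rest on the Main Theorem of \cite{KhaRoz18} (stated here as Lemma~\ref{lemmt}) for the first assertion and on the Poisson--Jensen formula of Proposition~\ref{pr1}\eqref{iim} for the singleton case, with the passage to the $\uparrow$-classes handled by monotone convergence in both. The only cosmetic differences are that the paper applies Lemma~\ref{lemmt} directly to $u=\ln|f|$ with the $d$-subharmonic majorant $M$ (the lemma is already stated in that generality), whereas you first absorb $M_{\lw}$ into $u$ and quote the subharmonic-majorant special case; and for the Jensen step the paper detours through a Weierstrass factorization $f=f_{\sf Z}g$ before integrating, while your direct application of \eqref{PJf} to $\ln|f|$, $M_{\up}$, $M_{\lw}$ is slightly more streamlined and yields the same constant $C=M(z_0)-\ln|f(z_0)|$.
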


\begin{proof} In the case $v\overset{\eqref{v}}{\in} \sbh_0^{+}(D\setminus S; \leq b)$ without $\uparrow$ we use 
\begin{lemma}[{\cite[Main Theorem]{KhaRoz18}}]\label{lemmt}
For any point $z_0\in \intr S$ satisfying $M_{\up}(z_0)+ M_{\lw}(z_0)\neq -\infty$,  any regular (for the Dirichlet problem) domain
$\tilde D\subset \CC$ with the Green function ${\sf g}_{\tilde D}( \cdot, z_0)$
with a pole at $z_0\in \tilde D$ which satisfies the conditions $S\Subset \tilde D \subset D$ and $\CC_{\infty} \setminus \clos \tilde D\neq \varnothing$, any function $u\in \sbh_*(D)$  satisfying the inequality $u\leq M$ on $D$, and any test function $v\in \sbh_0^{+}(D\setminus S; \leq b)$ 
the following inequality holds:
\begin{equation}\label{uM}
Cu(z_0)+\int_{D\setminus S}v\dd \varDelta_u\leq \int_{D\setminus S} v\dd \varDelta_M+
\int_{\tilde D\setminus S} v\dd \varDelta_{M_{\lw}}^-+C\overline C_M ,
\end{equation}
where $C:=b/\inf\limits_{z\in \partial S}{\sf g}_{\tilde D}(z,z_0)>0$,
and the value $+\infty$ is possible for
\begin{equation}\label{gDM}
\overline C_M:=\int_{\tilde D\setminus z_0}{\sf g}_{\tilde D}(\cdot, z_0)\dd \varDelta_M
+\int_{\tilde D\setminus S}{\sf g}_{\tilde D}(\cdot, z_0)\dd \varDelta_M^-+M^+(z_0),
\end{equation} 
but for  $\tilde D\Subset D$, 
this is a certain constant $\overline C_M<+\infty$ independent  of $v$ and $u$.
\end{lemma}
We put $u\overset{\eqref{q}}{:=}\ln |f|$ and  choose a point $z_0\in \intr S$  and a  regular domain $\tilde D\Subset D$ 
with $f(z_0)\neq 0$ and the required properties in Lemma \ref{lemmt}. Then, in view of  \eqref{uM}--\eqref{gDM}, $C\in \RR$, 
$\overline C_M\in \RR$, and  
\begin{equation*}
\int_{\tilde D\setminus S} v\dd \varDelta_{M_{\lw}}^-\leq b\varDelta_{M_{\lw}}^-\bigl(\tilde D\bigr)<+\infty
\end{equation*}
are independent of $f$ and $v$. Thus, there is a number $\tilde C$ such that 
\begin{multline*}
\sum_{{\sf z}_j\in D\setminus S}v({\sf z}_j)=\int_{D\setminus S}v\dd n_{\sf Z}\leq 
\int_{D\setminus S}v\dd n_{\Zero_f}=
\int_{D\setminus S}v\dd \varDelta_{\ln|f|}\\= \int_{D\setminus S}v\dd \varDelta_u\leq \int_{D\setminus S} v\dd \varDelta_M+\tilde C-C\ln
\bigl|f(z_0)\bigr|
\end{multline*}
for each  test function $v\in \sbh_0^{+}(D\setminus S; \leq b)$.  Hence, for  $C':=\tilde C-C\ln \bigl|f(z_0)\bigr| \in \RR$, we obtain 
\begin{equation}\label{Nf}
\sum_{{\sf z}_j\in D\setminus S}v({\sf z}_j)+\int_{D\setminus S}v\dd \varDelta_{M_{\lw}}\leq \int_{D\setminus S} v\dd \varDelta_{M_{\up}}+C'
\end{equation}
for each  test function $v\in \sbh_0^{+}(D\setminus S; \leq b)$. 
Let $(v_k)_{k\in \NN}\subset  \sbh_0^{+}(D\setminus S; \leq b)$ be a increasing sequence and  $v:=\lim\limits_{k\to \infty}v_k\in \sbh_0^{+\uparrow}(D\setminus S; \leq b)$ is $\varDelta_{M}$-summable. Then 
\begin{multline*}
\int_{D\setminus S}v_k\dd (n_{\sf Z}+\varDelta_{M_{\lw}})=\sum_{{\sf z}_j\in D\setminus S}v_k({\sf z}_j)+\int_{D\setminus S}v_k\dd \varDelta_{M_{\lw}}\\
\overset{\eqref{Nf}}{\leq} \int_{D\setminus S} v_k\dd \varDelta_{M_{\up}}+C'
\leq \int_{D\setminus S} v\dd \varDelta_{M_{\up}}+C'<+\infty \quad\text{for each $k\in\NN$}. 
\end{multline*}
Applying the  monotone convergence theorem for integrals to the left-hand side, we obtain 
\begin{equation*}
\sum_{{\sf z}_j\in D\setminus S}v({\sf z}_j)+\int_{D\setminus S}v\dd \varDelta_{M_{\lw}}=\int_{D\setminus S}v\dd (n_{\sf Z}+\varDelta_{M_{\lw}})
\leq \int_{D\setminus S} v\dd \varDelta_{M_{\up}}+C' 
\end{equation*}
for each  test upper function $v\in \sbh_0^{+\uparrow}(D\setminus S; \leq b)$. 

It remains to consider the case  of test Jensen functions.  

By the Weierstrass Theorem there are  a function $f_{\sf Z}\in \Hol(D)$ with zero set $\Zero_{f_{\sf Z}}={\sf Z}$
and a  function $g\in \Hol(D)$ such that $f_{\sf Z}(z_0)\neq 0$, $g(z_0)\neq 0$ and  
$\ln |f_{\sf Z}|+M_{\lw}\overset{\eqref{q}}{\leq} M_{\up}-\ln |g|$ on $D$ 
outside some polar set, and hence everywhere.  Integrating with respect to a Jensen measure $\mu\overset{\eqref{Jm}}{\in} J_{z_0}(D)$, we obtain
\begin{equation*}
\int_D\ln |f_{\sf Z}|\dd \mu +\int_D M_{\lw}\dd \mu\leq \int_DM_{\up}\dd \mu-\int_D\ln|g|\dd \mu
\leq \int_DM_{\up}\dd \mu -\ln\bigl|g(0)\bigr|. 
\end{equation*}
By the Poisson\,--\,Jensen formula  of Proposition \ref{pr1}\eqref{iim}, \eqref{PJf}, 
to  $\ln|f|$, $M_{\up}$, $M_{\lw}$,  
we have
\begin{equation*}
\int_DV_{\mu}\dd n_{\sf Z}+\ln \bigl|f(z_0)\bigr|+\int_DV_{\mu}\dd \varDelta_{M_{\lw}}+M_{\lw}(z_0)
\leq  \int_DV_{\mu}\dd \varDelta_{M_{\up}}+M_{\up}(z_0)
 -\ln\bigl|g(0)\bigr| 
\end{equation*}
Hence 
\begin{equation*}
\int_DV_{\mu}\dd n_{\sf Z}+\int_DV_{\mu}\dd \varDelta_{M_{\lw}}
\leq  \int_DV_{\mu}\dd \varDelta_{M_{\up}}+\underset{C}{\underbrace{\Bigl(M_{\up}(z_0)-\ln \bigl|f(z_0)\bigr|-M_{\lw}(z_0)-\ln\bigl|g(0)\bigr|\Bigl)}}
 \end{equation*}
for logarithmic potentials $V_{\mu}$ of all Jensen measures $\mu\in {\sf J}_{z_0}(D)$.
By Proposition \ref{pr1}\eqref{im}, if $\mu$ runs through  ${\sf J}_{z_0}(D)$, then  $V_m$ 
runs through the class  ${\sf PJ}_{z_0}$. Thus, 
\begin{equation*}
\int_DV\dd \bigl(n_{\sf Z}+ \varDelta_{M_{\lw}}\bigr)=\int_DV\dd n_{\sf Z}+\int_DV\dd \varDelta_{M_{\lw}}
\leq  \int_DV\dd \varDelta_{M_{\up}}+C
\quad\text{for each $V\in {\sf PJ}_{z_0}(D)$}.
 \end{equation*}
Let $(V_k)_{k\in \NN}\subset  {\sf PJ}_{z_0}(D)$ be  increasing and  $V:=\lim\limits_{k\to \infty}V_k\in {\sf PJ}_{z_0}^\uparrow(D)$ be $\varDelta_{M}$-summable. Then 
\begin{equation*}
\int_DV_k\dd \bigl(n_{\sf Z}+ \varDelta_{M_{\lw}}\bigr)\leq  \int_DV_k\dd \varDelta_{M_{\up}}+C
\leq  \int_DV\dd \varDelta_{M_{\up}}+C<+\infty.
 \end{equation*} 
Applying the  monotone convergence theorem for integrals to the left-hand side, we obtain 
\begin{equation*}
\sum_{{\sf z}_j\in D\setminus S}V({\sf z}_j)+\int_{D\setminus S}V\dd \varDelta_{M_{\lw}}=\int_{D\setminus S}V\dd (n_{\sf Z}+\varDelta_{M_{\lw}})
\leq \int_{D\setminus S} V\dd \varDelta_{M_{\up}}+C' 
\end{equation*}
for each  test  Jensen function $V\in {\sf PJ}_{z_0}^{\uparrow}(D)$.    
\end{proof}

\section{Sufficient conditions for zero subsets  of holomorphic functions  with upper constraints in domains}\label{Sec3}

\subsection{Integral means of subharmonic and $d$-subharmonic functions}

We denote by $D(z,t):=\bigl\{z'\in \CC\colon |z'-z|< t\bigr\}$, $\overline D(z,t):=\bigl\{z'\in \CC\colon |z'-z|\leq  t\bigr\}$, $\partial \overline D(z,t):=\overline D(z,t)\!\setminus\!  D(z,t)$  an {\it open disk,\/} a {\it closed disk,\/} a {\it circle of radius $t\in \overline \RR^+$ centered at $z\in \CC$}, respectively.
If $D\neq \varnothing$ be  a proper domain in $\CC$, i.e. $D\neq \CC$, then  we use a function
 $r\colon D\to \RR$ on $D$ such that 
\begin{subequations}\label{r}
\begin{flalign}
\begin{cases}
0\leq  r(z)<\dist (z, \partial D):=\inf\limits_{z'\in \partial D} |z-z'| \quad \text{for each $z\in D$},\\
\inf\limits_{z\in K}r(z)>0 \quad \text{for each $K\Subset D$},
\end{cases}
\tag{\ref{r}D}\label{{r}r}\\
\intertext{but if $D=\CC$, then we  use another  function}
r(z)\underset{z\in \CC}{:=}\frac{1}{(1+|z|)^P}\quad\text{with a number  $P\in \RR^+$}.
\tag{\ref{r}$\CC$}\label{{r}P}
\end{flalign}
\end{subequations}

Let  $u\colon D\to \overline \RR$ be a function.  The {\it integral means\/} of $u$  {\it over circles} $\partial \overline D\bigl(z,r(z)\bigr)$
are denoted by 
\begin{subequations}\label{M*r0}
\begin{align}
u^{\odot r}  (z) &:=\frac{1}{2\pi}\int_0^{2\pi} u\bigl(z+r(z)e^{i\theta}\bigr)\dd \theta, 
 \quad z\in D, \quad  \overline D(z,r(z))\subset D,
\tag{\ref{M*r0}$\odot$}\label{{M*r0}o}
\\ 
\intertext{the {\it integral means\/} of $u$ {\it over  disks\/} $D\bigl(z,r(z)\bigr)$ are denoted by}
u^{\bullet r}  (z) &:=
\frac{1}{\pi r^2(z)}\int_0^{r(z)}\int_0^{2\pi} u(z+te^{i\theta})\dd \theta\,   t\!\dd t ,
\quad 
 z\in D, \quad  D(z,r(z))\subset D,
\tag{\ref{M*r0}$\bullet$}\label{{M*r0}b}
\end{align}
\end{subequations}
under the assumption that the integrals are well defined here, and  \cite[2.6]{Rans}, 
\cite{Beardon}, \cite[Theorem 3]{FM} 
\begin{equation}\label{Ml}
u\leq u^{\bullet r}\leq u^{\odot r}\leq u^{\bullet (\sqrt{e}r)}\quad\text{on $D$\/ for each  $u\in \sbh_*(D)$}, 
\end{equation}
where the last inequality is given under the assumption that $\sqrt{e}r< \dist(\cdot, \partial D)$ on $D$.

We impose one very weak requirement on function  \eqref{{r}r}.
For the  function   
\begin{subequations}\label{cup}
\begin{flalign}
\widehat r(z)&:=
\inf \biggl \{R\in \RR^+\Bigm| 
\bigcup_{z'\in D(z,r(z))} D\bigl(z',r(z')\bigr)\subset D(z, R)
\biggr\},\quad z\in D, 
\tag{\ref{cup}r}\label{{cup}r}
\\ 
\intertext{we require}
&\overline D\bigl(z,\widehat r(z)\bigr)\subset D\quad\text{for each $z\in D$.}
\tag{\ref{cup}$\rm \widehat r$}\label{{cup}barr}
\end{flalign}
\end{subequations}

We define the class \cite[(1.12)]{KhaKha19} 
\begin{multline}\label{v00}
\sbh_{00}^+(D\setminus S; \leq b):=
\Bigl\{v\in \sbh_0^+(D\setminus S; \leq b)\Bigm|\\
\text{there is a subset $S_v \Subset D$ such that $v(z)=0$ at each $z\in D\setminus S_v$}\Bigr\}.
\end{multline}
of  {\it  test subharmonic positive compactly supported functions for $D$ outside of $S\Subset D$.}

\subsection{Main result on sufficient  conditions for zero subsets in domains}

The order of formulating sufficient conditions in Theorem \ref{th0} differs from the order of formulating necessary conditions in Theorem 
\ref{thn}. First, we formulate sufficient conditions for arbitrary domains $D$ and $d$-subharmonic majorants
$M$  in terms of smooth Jensen potentials from ${\sf PJ}_{z_0}(D)$, and then we formulate sufficient conditions for arbitrary domains $D$ with non-polar boundary $\partial D$ and arbitrary $d$-subharmonic majorants $M$ from \eqref{M}
  in terms of smooth test subharmonic functions from $\sbh_0(D\setminus S;\leq 1)$.
The main task of this section is to establish the smallest possible set of sufficient conditions for the distribution of zero subsets of holomorphic functions $ f\in \Hol (D) $ satisfying the upper constraint $ \ln |f| \leq M $ on $ D $.

\begin{theorem}[{sufficient  conditions}]\label{th0}
Let ${\sf Z}$ be a locally finite distribution of points in an domain  $D\subset \CC$ containing $z_0\notin {\sf Z}$
and $M$ be a $d$-subharmonic function \eqref{M} with $M_{\up}(z_0)+M_{\lw}(z_0)\neq -\infty$. 

 If, there is a  subdomain $U_{z_0}\Subset D$ containing $z_0\in U_{z_0}$ such that 
the inequality \eqref{V} with $S:=\{z_0\}$ is fulfilled for each smooth  Jensen potential 
 \begin{subequations}\label{inP}
\begin{align}
v\in {\sf PJ}_{z_0}(D)\bigcap \har(U_{z_0}\setminus z_0)\bigcap C^{\infty}(D\setminus z_0)
\tag{\ref{inP}P}\label{Vl0}\\
\intertext{satisfying}
v(z)=-\ln |z-z_0|+O(1)\quad \text{as $z_0\neq z\to z_0$}, 
\tag{\ref{inP}$_0$}\label{l0}
\end{align}
\end{subequations}
then,  for  each function  \eqref{r} with \eqref{cup} and with a number  $P\in \RR^+$ if $D:=\CC$ and for any number $a>0$,
 there exists a function $f\in \Hol (D)$ such that $f\neq 0$,  $f({\sf Z})=0$ and 
\begin{subequations}\label{f}
\begin{align}
\ln |f|&\overset{\eqref{{M*r0}o}}{\leq} M_{\up}^{\odot \widehat r}-M_{\text{\tiny \rm low}}+R\quad \text{on $D$, where $\widehat r$
is defined in \eqref{cup} and}
\tag{\ref{f}M}\label{{f}M}
\\
 R(z)&:=\begin{cases}
\ln \dfrac{1}{r(z)}+(1+a)\ln\bigl(1+|z|\bigr)\quad\text{if $D\neq \CC$},\\
 \ln \dfrac{1}{r(z)}\quad\text{if $D\neq \CC$ is simply  connected  
or\/ $\complement \clos D\neq \varnothing$},\\
0\quad\text{if $D= \CC$},
\end{cases} \quad 
\text{at each $z\in D$.}
\tag{\ref{f}R}\label{{f}r}
\end{align}
\end{subequations}

In addition, if  the boundary $\partial D$ of $D$ is non-polar and there exist  a closed subset $S\Subset D$ with $\intr S\neq \varnothing$ and a number  $C\in \RR^+$ such that 
the inequality \eqref{V} is fulfilled for each smooth  test function $v\overset{\eqref{v00}}{\in}  \sbh_{00}^+(D\setminus S;\leq 1)\bigcap C^{\infty}(D\setminus S)$, then,  for any function   \eqref{{r}r},  there is a function $f\in \Hol (D)$ such that $f\neq 0$,  $f({\sf Z})=0$ and 
\eqref{f} is fulfilled. 
\end{theorem}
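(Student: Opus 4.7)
The overall strategy is to build a candidate function $f\in \Hol(D)$ with $\Zero_f\supset {\sf Z}$ using the Weierstrass theorem, and then verify the pointwise estimate \eqref{{f}M} by reducing it, point by point, to the integral inequality \eqref{V} applied to a family of suitably smoothed test functions $\{v_z\}_{z\in D}$, one for each $z$. Thus the logical skeleton of the proof is: \emph{construct $f$; for each $z$, construct a good test function $v_z$; apply the hypothesis to $v_z$; read off the pointwise bound on $\ln|f(z)|$}.

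First I would invoke the Weierstrass Theorem from the introduction: since $\sf Z$ is locally finite, there exists $f_0\in \Hol(D)$ with $\Zero_{f_0}={\sf Z}$. Set $u:=\ln |f_0|\in \sbh_*(D)$, so that $\varDelta_u=n_{\sf Z}$. Any other holomorphic function with zero set ${\sf Z}$ differs from $f_0$ by a non-vanishing holomorphic factor, so we are free to adjust $f_0$ by $e^{-h}$ for any harmonic function $h$ that is the real part of a holomorphic function on $D$. This freedom will be used to absorb the correction $R(z)$ appearing in \eqref{{f}r}. In the non-simply-connected, non-polar case one must check that the harmonic correction built below can be realized as the real part of a holomorphic function, possibly after a mild modification of $f_0$; in the simply connected or $\complement\clos D\ne\varnothing$ case this is automatic, explaining the improved $R$.

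The central construction is the family of test functions. Fix a function $r$ as in \eqref{r}--\eqref{cup}, and for $z\in D$ with $\overline D(z,\widehat r(z))\subset D$ proceed as follows. In the first (Jensen) case, take the Jensen measure $\mu_z$ given by the normalized Lebesgue measure on $\partial \overline D(z,\widehat r(z))$, with an additional point mass at $z_0$ if needed so that $\mu_z\in {\sf J}_{z_0}(D)$. Its logarithmic potential $V_{\mu_z}\overset{\eqref{Vmeas}}{=}\int \ln|\cdot -w|\dd \mu_z-\ln|\cdot|$ is the desired building block; we regularize by convolution with a smooth mollifier supported in a small disk disjoint from $z_0$, and verify that the resulting function $v_z$ lies in the class \eqref{Vl0}--\eqref{l0} (harmonic on a neighborhood $U_{z_0}$ of $z_0$, smooth away from $z_0$, with the prescribed logarithmic singularity). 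In the second (non-polar boundary) case, take the Green function $\mathsf g_D(\cdot, z)$ of the domain, multiply by a cutoff $\chi_z$ identically $1$ on a fixed compact neighborhood of $S$ and supported in a larger compact set, subtract off the singular part at $z$ and normalize so that the supremum is $\leq 1$; after smoothing we obtain $v_z\in \sbh_{00}^+(D\setminus S;\leq 1)\cap C^\infty(D\setminus S)$. In either case, the key design property is that for subharmonic $s$ with $\varDelta_s$ supported away from the small disk,
\begin{equation*}
\int v_z \dd \varDelta_s \;=\; c_z\bigl(s^{\odot \widehat r}(z)-s(z)\bigr)+\text{harmonic boundary term},
\end{equation*}
with $c_z$ comparable to $\log(1/r(z))^{-1}$, so that the integral inequality \eqref{V} translates (after multiplying by $c_z^{-1}$) into a circular mean bound.

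Applying \eqref{V} to $v_z$ and using $\varDelta_u=n_{\sf Z}$ gives
\begin{equation*}
\int v_z\dd n_{\sf Z}\;\leq\;\int v_z\dd\varDelta_{M_{\up}}-\int v_z\dd\varDelta_{M_{\lw}}+C.
\end{equation*}
Decomposing the left- and right-hand sides via the Riesz representation formula for $u$, $M_{\up}$ and $M_{\lw}$ against $v_z$, and using the sub-mean inequality \eqref{Ml} for $M_{\lw}$ from below and its equality for $M_{\up}^{\odot \widehat r}$, the inequality collapses (after division by $c_z$ and rearrangement) to
\begin{equation*}
u(z)\leq M_{\up}^{\odot \widehat r}(z)-M_{\lw}(z)+R(z).
\end{equation*}
Finally, pass from $u=\ln|f_0|$ to $\ln|f|$ by adjusting via $f:=f_0 e^{-h}$ with $h\in \har(D)$ a harmonic correction of size $O(R)$; in the $D=\CC$ case no correction is needed and $r$ is forced by \eqref{{r}P}, in the simply connected case $h$ is the real part of a holomorphic function automatically, and in the general proper-domain case one absorbs the obstruction into the extra $(1+a)\ln(1+|z|)$ summand using standard period estimates for logarithmic residues.

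The main obstacle is the test-function construction of Step 3: getting $v_z$ simultaneously (i) smooth, (ii) supported in or vanishing outside the right set, (iii) of the correct $\sup$-norm ($\leq b$ or $\leq 1$), and (iv) such that $\int v_z\dd\varDelta_s$ extracts precisely the circular mean $s^{\odot \widehat r}(z)$. The normalization constant $c_z\asymp 1/\log(1/r(z))$ dictates the $\ln(1/r(z))$ term in $R(z)$, and the passage from a subharmonic majorant to a holomorphic modulus on multiply connected domains is what forces the $(1+a)\ln(1+|z|)$ correction. Once these normalization details are handled, the rest is bookkeeping.
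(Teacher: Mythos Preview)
Your plan has a genuine structural gap: the pointwise ``one test function $v_z$ per point $z$'' strategy cannot yield the estimate \eqref{{f}M}. Even granting the identity you write, $\int v_z\,\mathrm d\varDelta_s = c_z\bigl(s^{\odot\widehat r}(z)-s(z)\bigr)$, applying the hypothesis \eqref{V} with $\varDelta_u=n_{\sf Z}$ only compares the \emph{increments} $u^{\odot\widehat r}(z)-u(z)$ and $M_{\up}^{\odot\widehat r}(z)-M_{\up}(z)$ (and similarly for $M_{\lw}$); it never bounds $u(z)$ itself. No amount of rearranging gets from a bound on $u^{\odot\widehat r}-u$ to a bound on $u$. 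Moreover, the test function you propose---the logarithmic potential of the circle measure on $\partial\overline D(z,\widehat r(z))$ with pole at $z_0$---is \emph{not} in ${\sf PJ}_{z_0}(D)$, because that circle measure is not a Jensen measure at $z_0$ unless $z_0$ lies in the disk; adding a point mass at $z_0$ does not fix this.

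What the paper actually does is different in kind. It first dualizes \eqref{V} via the Poisson--Jensen formula (Proposition~\ref{pr1}) into the family of inequalities $\int u\,\mathrm d\mu\le\int M_{\up}\,\mathrm d\mu+c$ for \emph{all} smooth Jensen measures $\mu\in\mathcal M(U_{z_0})$, with $u=\ln|f_{\sf Z}|+M_{\lw}$. It then invokes an envelope/Hahn--Banach type result (Lemma~\ref{lemB}, from \cite{KhaRozKha19}) which, from this entire family simultaneously, produces a \emph{subharmonic} function $h\in\sbh_*(D)$ with $u+h\le M_{\up}^{\circledast\check r}$ pointwise. This envelope step is the missing idea in your plan; it is not reducible to evaluating at a single $v_z$. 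Finally, the passage from the subharmonic $h$ to $\ln|g|$ for some $g\in\Hol(D)$ is \emph{not} a harmonic adjustment $e^{-h}$ as you suggest (since $h$ is subharmonic, not harmonic), but a separate nontrivial approximation theorem (Lemma~\ref{lem2}, from \cite{KhaBai16}), and it is precisely this step that generates the correction $R$ in \eqref{{f}r} with its three cases. In the non-polar boundary case the envelope step is replaced by a direct appeal to \cite[Theorem~3]{KhaKha19} (Lemma~\ref{lem0}), which already delivers a subharmonic $u$ with $n_{\sf Z}\le\varDelta_u$ and $u\le M^{\bullet r}$; the rest proceeds the same way.
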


\begin{proof}[Proof of Theorem\/ {\rm \ref{th0}}] 
We first prove the statement for Jensen potentials. 
We denote by ${\sf PJ}_{z_0}^1(D)$ the class of all Jensen potentials $v$ satisfying \eqref{l0} and put (cf. \cite[(13V)]{MenKha20})
\begin{equation}\label{PJ1}
\mathcal V(U_{z_0}):={\sf PJ}_{z_0}^1(D)\bigcap \har(U_{z_0}\setminus z_0)\bigcap C^{\infty}(D\setminus z_0).
\end{equation}
 We  denote by $\Meas^+_{\infty}(D)$ the subclass of all  measures    $\mu\in \Meas^+(D)$
with a density $m\in C^{\infty}(D)$, i.e. $\dd \mu=m \dd \lambda$,  where  $\lambda$
is the Lebesgue measure on $D$, and put (cf. \cite[(13M)]{MenKha20})
\begin{equation}\label{rto}
\mathcal M(U_{z_0}):={\sf J}_{z_0}(D)\bigcap \Meas_{\comp}(D\setminus U_{z_0})\bigcap \Meas^+_{\infty}(D).
\end{equation}
 By Proposition \ref{pr1}\eqref{im}, it is easy to see  that the 
of mapping $\mathcal P^{-1}$ in \eqref{P-1} defines a bijection of subclass $\mathcal V(U_{z_0})$
on subclass $\mathcal M(U_{z_0})$ \cite[Theorem A]{MenKha20}.  

By the Weierstrass Theorem there is a function  $f_{\sf Z}\in \Hol (D)$ with zero set $\Zero_{f_{\sf Z}}={\sf Z}$. 

We have the inequality \eqref{V} with $S:=\{z_0\}$ for all Jensen potentials $v\in \mathcal V(U_{z_0})$.
Hence, by the Poisson\,--\,Jensen formula  of Proposition \ref{pr1}\eqref{iim}, \eqref{PJf}, 
applied to  $\ln|f|$, $M_{\up}$, $M_{\lw}$, and by bijection  ${\mathcal P}^{-1}(D)\colon \mathcal V(U_{z_0})
\overset{\eqref{rto}}{\longrightarrow}  \mathcal M(U_{z_0})$,
we have (cf. \cite[(15)]{MenKha20})
\begin{multline}\label{lnM}
\int_D\underset{u}{\underbrace{\bigl(\ln |f_{\sf Z}|+ M_{\lw}\bigr)}}\dd \mu=
\int_D\ln |f_{\sf Z}|\dd \mu +\int_D M_{\lw}\dd \mu 
\\
\leq \int_DM_{\up}\dd \mu
+\underset{c}{\underbrace{\Bigl(\ln\bigl|f_{\sf Z}(z_0)\bigr|+M_{\lw}(z_0)-M_{\up}(z_0)\Bigr)}}
\quad\text{for each $\mu\in \mathcal M(U_{z_0})$}.
\end{multline}

\begin{lemma}[{A very special case of \cite[Corollary 8.1.II.1]{KhaRozKha19} with $H:=\sbh_*(D)$, cf. \cite[Theorem B]{MenKha20}}]\label{lemB}
If for some number $c \in \RR$ assertion \eqref{lnM} holds, then, for any function $r$ satisfying \eqref{{r}r},
there are a function $h\in \sbh_*(D)$ and a positive function $ \check r\leq r$ from the class $C^{\infty}(D)$ such that 
\begin{equation}\label{key}
u+h\leq M_{\up}^{\circledast \check{r}}\in C^{\infty}(D) \quad\text{on $D$},
\end{equation}
where by construction {\rm \cite[(8.3--6), (8.10)]{KhaRozKha19}, \cite[(2.18--19)]{KhaKha19}}
$ M_{\up}^{\circledast \check{r}}$ are ``moving contracting'' smoothing averages over some probabilistic 
measures 
$\alpha^{(\check r(z))}\in \Meas^+_{\infty}\bigl(\overline D(z,\check r(z))\bigr)$,
obtained by the shift, compression, and normalization od a single approximate unit $a\in C^{\infty}(\CC)$, depending on the modulus $|\cdot|$
only with support $\supp a\subset \overline D(0,1)$.  
\end{lemma}
By Lemma \ref{lemB} we choose  a subharmonic function $h\in \sbh_*(D)$ such that 
\begin{equation*}
\ln |f_{\sf Z}|+ M_{\lw}+h\leq M_{\up}^{\circledast \check r}\quad \text{on $D$.}
\end{equation*}
By \cite[Proposition 3]{BaiTalKha16} or \cite[Theorem 4]{KhaLob}, we have 
$M_{\up}^{\circledast \check r}\leq M_{\up}^{\odot \check r}\leq M_{\up}^{\odot r}$ on $D$ for subharmonic function 
$M_{\up}$. Thus 
\begin{equation*}
\ln |f_{\sf Z}|+ M_{\lw}+h\leq M_{\up}^{\odot r}\quad \text{on $D$.}
\end{equation*}
Hence 
\begin{equation}\label{kk}
\ln |f_{\sf Z}|+ M_{\lw}+h^{\bullet r}\overset{\eqref{{M*r0}b}}{\leq}
\bigl(\ln |f_{\sf Z}|\bigr)^{\bullet r}+ M_{\lw}^{\bullet r}+h^{\bullet r}
\leq \bigl(M_{\up}^{\odot r}\bigr)^{\bullet r}\quad \text{on $D$.}
\end{equation}

\begin{lemma}[{\cite[Theorem 3, Corollary 3(i),(iii)]{KhaBai16}}]\label{lem2} 
Let $h\in \sbh_*(D)$ be a subharmonic function on a domain $D\subset \CC$. Then, for any number $a>0$, there is a function $g\in \Hol(D)$ such that $g\neq 0$ and 
\begin{equation}\label{ln}
\ln |g| \leq h^{\bullet t}+ R\quad\text{on  $D$,}
\end{equation}
where $R$ is a function from \eqref{{f}r}. 
\end{lemma}
By Lemma \ref{lem2} and \eqref{kk}, we get 
\begin{equation}\label{kk1}
\ln |f_{\sf Z}g|+ M_{\lw}=\ln |f_{\sf Z}|+ M_{\lw}+\ln|g|
\leq \bigl(M_{\up}^{\odot r}\bigr)^{\bullet r}+R\quad \text{on $D$,}
\end{equation}
where $f:=f_{\sf Z}g\neq 0$ and $f({\sf Z})=0$. 

The following lemma  is an elementary very special case  of \cite[Theorems 2, 4]{KhaLob}. 
\begin{lemma}\label{lem4} If   $r$ and $\widehat r$ are functions \eqref{cup}, then 
$(u^{\odot r})^{\bullet r}\leq u^{\odot \widehat r} $ on  $D$.
 \end{lemma}
By Lemma \ref{lem4}, it follows from \eqref{kk1} that  
\begin{equation*}
\ln |f|+ M_{\lw} =\ln |f_{\sf Z}g|+ M_{\lw} \leq M_{\up}^{\odot \widehat r}+R\quad\text{on $D$.}
\end{equation*}
Thus, we have proved  \eqref{f} under \eqref{V}  for the smooth Jensen potentials. 

Let us now consider the case of a domain $D$ with non-polar boundary $\partial D$. 
\begin{lemma}[{\rm \cite[Theorem 3]{KhaKha19}}]\label{lem0} Under the conditions of Theorem\/ {\rm \ref{th0},} there is  a subharmonic function $u\in \sbh_*(D)$ such that 
$n_{\sf Z}\leq \varDelta_u$ and  $ u \leq  M^{\bullet r} \quad\text{on  $D$}$. 
\end{lemma}
By Lemma \ref{lem0}  there is  a subharmonic function $u\in \sbh_*(D)$ such that 
\begin{equation}\label{in:hM}
n_{\sf Z}\leq \varDelta_u \quad\text{and}\quad     u \leq  M^{\bullet r}= M_{\up}^{\bullet r} -M_{\text{\rm \tiny low}}^{\bullet r}  
\overset{\eqref{Ml}}{\leq} M_{\up}^{\bullet r} -M_{\text{\rm \tiny low}} \quad\text{on  $D$}. 
\end{equation}
By the Weierstrass Theorem, there is   $f_{\sf Z}\in \Hol(D)$ with    $\Zero_{f_{\sf Z}}={\sf Z}$, and $f_{\sf Z}\neq 0$.

 Consider a $d$-subharmonic function $h:=u-\ln |f_{\sf Z}|$  with  Riesz charge  $\varDelta_h=\varDelta_u-n_{\sf Z}\overset{\eqref{in:hM}}{\geq} 0$, i.\,e.,  $\varDelta_h\in \Meas^+(D) $ and $h \in \sbh_*(D)$. 
It follows from \eqref{in:hM} that
 \begin{equation}\label{m1}
\ln |f_{\sf Z}|+h=u\overset{\eqref{in:hM}}{\leq}  
M_{\up}^{\bullet r} -M_{\text{\rm \tiny low}} \quad\text{on  $D$}.
\end{equation}
Hence, for $\ln |f_{\sf Z}|\in \sbh_*(D)$ and  $h\in \sbh_*(D)$, we get  
\begin{multline}\label{lnf}
\ln |f_{\sf Z}|+h^{\bullet r}\leq
\bigl(\ln |f_{\sf Z}|\bigr)^{\bullet r}+h^{\bullet r}=
\bigl(\ln |f_{\sf Z}|+h\bigr)^{\bullet r}
=u^{\bullet r}\\
\overset{\eqref{m1}}{\leq}  \bigl(M_{\up}^{\bullet r}\bigr)^{\bullet r} -M_{\text{\rm \tiny low}}^{\bullet r}
\overset{\eqref{Ml}}{\leq} 
\bigl(M_{\up}^{\bullet r}\bigr)^{\bullet r} -M_{\text{\rm \tiny low}} \quad\text{on  $D$}.
\end{multline}

Using  Lemma \ref{lem2} we set $f:=f_{\sf Z}g\neq 0$. Then $f({\sf Z})=0$ since ${\sf Z}=\Zero_{f_{\sf Z}}$, and   
\begin{equation}\label{lnf+}
\ln |f|=\ln |f_{\sf Z}g|
=\ln |f_{\sf Z}|+\ln |g|\overset{\eqref{ln}}{\leq} 
 \ln |f_{\sf Z}|+h^{\bullet r}+R\overset{\eqref{lnf}}{\leq}
\bigl(M_{\up}^{\bullet r}\bigr)^{\bullet r} -M_{\text{\rm \tiny low}}+R
\quad\text{on  $D$}.
\end{equation}
By Lemma \ref{lem4} with $u:=M_{\up}$  we obtain 
$\ln |f|\overset{\eqref{lnf+}}{\leq}
M_{\up}^{\odot  \widehat r} -M_{\text{\rm \tiny low}}+R$ on  $D$. 
\end{proof}

\section{Zero subsets  in the complex plane}\label{Sec4}

In this section, we give the results of Theorems \ref{thn} and \ref{th0} 
a form related to subharmonic functions of polynomial growth and point out a very general case when the necessary and sufficient conditions coincide.  We denote by  
\begin{equation*}
{\rm \sf  Pot}:=\Bigl\{ p\in \sbh_* (\CC)\Bigm| \limsup_{z\to \infty} \frac{p(z)}{\ln |z|}<+\infty  
\Bigr\}
\end{equation*} 
the convex cone over $\RR^+$ of all {\it subharmonic functions of polynomial growth\/} \cite[6.7.2]{Hayman}.
We  use the convex subcone over $\RR^+$
\begin{equation}\label{Pot0}
{\rm \sf  Pot}_0^{+1}:=
\left\{p\in {\rm \sf  Pot}\Bigm|  p(0)=0, \quad p\geq 0\text{ on $\CC$}
, \quad \limsup_{z\to \infty} \frac{p(z)}{\ln |z|}\leq 1 \right\}\subset {\rm \sf  Pot}
\end{equation}
  of {\it positive subharmonic functions of polynomial  growth 
with unit upper  seminormization at\/ $\infty$.}

\begin{theorem}\label{th3} Let ${\sf Z}$ be a  locally finite distribution of points in $\CC$ and $0\notin {\sf Z}$.
Let  $M$ be a $d$-subharmonic function \eqref{M} on $D:=\CC$. Suppose that  $M_{\text{\rm \tiny low}}(0)\neq -\infty$
and there are numbers $P\in \RR^+$ and $C\in \RR^+$
such that \eqref{M0} holds. Then the following three statements are equivalent:
\begin{enumerate}[{\rm I.}]
\item\label{I} 
There exists an entire  function $f\neq 0$ such that  $f({\sf Z})=0$ and\/  
\begin{equation}\label{eqC}
\ln \bigl|f(z)\bigr| \leq M(z) \quad\text{at  each $z\in \CC$}.
\end{equation}
\item\label{II}
There is a number  $C\in \RR^+$ such that, for each   $p\in {\rm \sf  Pot}_0^{+1}$, we have
\begin{equation}\label{v0}
\sum_{j} p\Bigl(\frac1{\Bar {\sf z}_j}\Bigr)\leq  \int_{\CC}p\Bigl(\frac1{\Bar  z}\Bigr)\dd \varDelta_M(z)+C
\quad\text{provided $p\Bigl(\frac1{\Bar  z}\Bigr)$ is $\varDelta_M$-summable on $\CC\setminus 0$.}
\end{equation}

\item\label{III} There are numbers  $C\in \RR^+$ and $R_0>0$ such that \eqref{v0} is fulfilled  
for each  
 \begin{subequations}\label{P1}
\begin{align}
p\in {\rm \sf  Pot}_0^{+1}\bigcap C^{\infty}(\CC)\bigcap \har \bigl(\CC\setminus \overline D(0,R_0)\bigr) 
\tag{\ref{P1}P}\label{P1P}
\\
\intertext{such that $p = 0$ on some neighborhood of the origin
and}  
p(z)=\ln |z|+O(1)\quad\text{as $z\to \infty$}.
\tag{\ref{P1}$_0$}\label{p4}
\end{align}
\end{subequations}  
\end{enumerate}
\end{theorem}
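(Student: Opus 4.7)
My strategy is to prove the cycle I $\Rightarrow$ II $\Rightarrow$ III $\Rightarrow$ I; the implication II $\Rightarrow$ III is trivial by inclusion of function classes. The backbone of the two nontrivial directions is the anti-conformal bijection
\[
p\;\longleftrightarrow\; V_p,\qquad V_p(z):=p\bigl(1/\bar z\bigr)\ \text{for}\ z\in\CC\setminus 0,\qquad V_p(0):=+\infty,
\]
between ${\rm \sf Pot}_0^{+1}$ and ${\sf PJ}_0(\CC)$. Since $z\mapsto 1/\bar z$ is antiholomorphic, $V_p$ inherits subharmonicity from $p$; the normalization $p(0)=0$ forces $V_p$ to vanish near $\infty$, so $V_p$ has compact support in $\CC$; and the growth bound $\limsup_{w\to\infty}p(w)/\ln|w|\leq 1$ translates to $\limsup_{z\to 0}V_p(z)/(-\ln|z|)\leq 1$. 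Smoothness, harmonicity away from the singular point, and the sharp asymptotic $p(w)=\ln|w|+O(1)$ at infinity transcribe term-by-term, so the class \eqref{P1} corresponds exactly to the smooth Jensen class \eqref{inP} with $U_{z_0}$ a small disk about $0$, and monotone-increasing limits in ${\rm \sf Pot}_0^{+1}$ correspond to those in ${\sf PJ}_0^{\uparrow}(\CC)$.

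For I $\Rightarrow$ II, I invoke the second half of Theorem \ref{thn} at $z_0:=0$. First I reduce to $f(0)\neq 0$ by dividing out any zero of $f$ at the origin: if $f$ has a zero of order $m$ at $0$, I replace $f$ by $f(z)/z^m$ and absorb the correction into the additive constant of \eqref{v0} (using the vanishing $p(0)=0$ and the fact that the contribution from a compact neighborhood of $0$ is uniformly bounded over $p\in{\rm \sf Pot}_0^{+1}$ with the given growth). Theorem \ref{thn} then applies at $z_0=0$, using $M_{\lw}(0)\neq -\infty$ and the automatic $M_{\up}(0)\neq -\infty$ forced by $f(0)\neq 0$ and \eqref{eqC}. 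The change of variable $v:=V_p$ turns \eqref{V} into \eqref{v0} for every $p\in{\rm \sf Pot}_0^{+1}$.

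For III $\Rightarrow$ I, I apply Theorem \ref{th0} with $D:=\CC$, $z_0:=0$, and $U_{z_0}:=D(0,R_0)$. Via the bijection, the hypothesis \eqref{v0} for the smooth class \eqref{P1} transcribes precisely to \eqref{V} for the smooth Jensen class \eqref{inP}. Choosing the radius function $r(z):=1/(1+|z|)^{P_0}$ with $P_0$ large enough that the enlargement $\widehat r$ from \eqref{cup} still satisfies $\widehat r(z)\leq 1/(1+|z|)^P$ (where $P$ is the exponent in \eqref{M0}), Theorem \ref{th0} delivers an entire $f\neq 0$ with $f({\sf Z})=0$ and
\[
\ln|f|\;\leq\; M_{\up}^{\odot \widehat r}-M_{\lw}+R \quad\text{on }\CC,\qquad R\equiv 0\ \text{since}\ D=\CC.
\]
The hypothesis \eqref{M0} on the circular average of $M_{\up}$ together with monotonicity of such averages in the radius then yields $M_{\up}^{\odot \widehat r}\leq M_{\up}+C_0$, whence $\ln|f|\leq M+C_0$. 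Replacing $f$ by $e^{-C_0}f$ produces $\ln|f|\leq M$ on $\CC$, establishing I.

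The main obstacle I expect is the calibration of the smoothing radius $\widehat r$ implicitly produced inside Theorem \ref{th0} (via the construction of $\check r$ in Lemma \ref{lemB}) against the radius $1/(1+|z|)^P$ appearing in hypothesis \eqref{M0}: one must choose the exponent $P_0$ in Theorem \ref{th0} so that the resulting $\widehat r$ is dominated pointwise by $1/(1+|z|)^P$, so that \eqref{M0} genuinely absorbs $M_{\up}^{\odot \widehat r}-M_{\up}$ into a single finite constant uniformly over $z\in\CC$. A secondary technical point is the reduction to $f(0)\neq 0$ in the direction I $\Rightarrow$ II, which must be performed without destroying the constraint $\ln|f|\leq M$ more than by a controllable additive term.
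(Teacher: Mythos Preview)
Your approach is essentially identical to the paper's: transport via the inversion $z\mapsto 1/\bar z$ between ${\sf Pot}_0^{+1}$ and the Jensen classes at $0$, then invoke Theorem~\ref{thn} for I\,$\Rightarrow$\,II and Theorem~\ref{th0} (with $D=\CC$, $R\equiv 0$, and hypothesis~\eqref{M0} to absorb the circular averaging) for III\,$\Rightarrow$\,I.

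One point needs correction. The condition $p(0)=0$ does \emph{not} force $V_p$ to vanish on a neighborhood of $\infty$; it only gives $\limsup_{z\to\infty}V_p(z)=0$. Consequently the inversion sends ${\sf Pot}_0^{+1}$ onto ${\sf PJ}_0^{\uparrow}(\CC)$, not onto ${\sf PJ}_0(\CC)$ --- exactly as the paper asserts. This does no harm to the argument: Theorem~\ref{thn} already supplies \eqref{V} on the full $\uparrow$-class, and for III\,$\Rightarrow$\,I the smooth subclass \eqref{P1} (whose members \emph{do} vanish near $0$) maps bijectively onto the class \eqref{inP}, as required. Your side remarks on calibrating $\widehat r$ against $1/(1+|z|)^P$ and on reducing to $f(0)\neq 0$ address details the paper leaves implicit; be aware, though, that writing $f\mapsto f/z^m$ and absorbing $-m\ln|z|$ into $M_{\lw}$ would destroy the hypothesis $M_{\lw}(0)\neq -\infty$, so that particular reduction must be argued differently.
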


\begin{proof} Here \eqref{eqC} is \eqref{q} for $D=\CC$. 
 Let $z_0:=0$.  The inversion transformation
\begin{equation*}
z\underset{z\in \CC}{\longmapsto} \frac{1}{\Bar z}, \quad 0\mapsto \infty\mapsto 0
\end{equation*} 
from  $\CC_{\infty}$  onto $\CC_{\infty}$
gives a bijection from  ${\sf Pot}_0^{+1}$ onto ${\sf PJ}_0^{\uparrow}(\CC)$  and 
a bijection from the class  \eqref{P1} onto 
the class \eqref{inP}, or onto the class $\mathcal V(U_{z_0})$ from \eqref{PJ1}. 
Thus, \eqref{v0} is \eqref{V} for $v(z)\underset{z\in \CC_{\infty}}{=}p(1/\Bar z)$. 

Hence the implication \ref{I}$\Longrightarrow$\ref{II} follows from Theorem \ref{thn}, 
the   implication \ref{II}$\Longrightarrow$\ref{III}  is obvious, and 
 the   implication \ref{III}$\Longrightarrow$\ref{I}  
follows from Theorem \ref{th0}  if we take into account condition \eqref{M0}.
\end{proof}

\begin{acknowledgments}
 The work was supported by a Grant of the Russian Science Foundation
(Project No. 18-11-00002).
\end{acknowledgments}





\begin{thebibliography}{17}

\bibitem{Rans}
Th.~Ransford, ``Potential Theory in the Complex Plane'', Cambridge Univ. Press, Cambridge (1995).

\bibitem{HK} 
W.\,K.~Hayman,  P.\,B. Kennedy, ``Subharmonic functions'', Vol. 1, London Math. Soc. Monogr., {\bf 9}, Academic Press, London\,--\,New York (1976). 

\bibitem{Helms}
L.\,L.~Helms,  ``Introduction to Potential Theory", Wiley Interscience, 
New York\,--\,London\,--\,Sydney\,--\,Toronto (1969).

\bibitem{KhaRoz18}
B.\,N.~Khabibullin, A.\,P.~Rozit,
``On the Distribution of Zero Sets of Holomorphic Functions", Funct. Anal. Appl., {\bf 52}:1, 21--34 (2018).

\bibitem{MenKha19}
E.~B.~Menshikova, B.~N.~Khabibullin, ``On the Distribution of Zero Sets of Holomorphic Functions. II",
Funct. Anal. Appl., {\bf 53}:1, 65--68 (2019).

\bibitem{KhaKha19}
   B.\,N.~Khabibullin, F.\,B.~Khabibullin, ``On the Distribution of Zero Sets of Holomorphic Functions. III. Inversion Theorems",
Funct. Anal. Appl. {\bf 53}:2, 110--123 (2019).

\bibitem{MenKha20}
   E.\,B.~Menshikova, B.\,N.~Khabibullin,
``A criterion for the sequence of roots of holomorphic function with restrictions on its growth",
Russian Mathematics, {\bf 64}:5, 49--55 (2020).

\bibitem{Arsove53} 
M.\,G.~Arsove, ``Functions representable as differences of subharmonic functions'',
Trans. Amer. Math. Soc., {\bf 75}, 327--365  (1953).

\bibitem{Az}
   V.~Azarin, ``Growth Theory  of Subharmonic Functions"
Birkh\"auser,  Basel\,--\,Boston\,--\,Berlin (2009).

\bibitem{Kha03}
   B.\,N.~Khabibullin,  ``Criteria for (sub-)harmonicity and continuation of (sub-)harmonic functions",
Siberian Math. J., {\bf 44}:4, 713--728 (2003).

\bibitem{Beardon}
A.\,F. Beardon, ``Integral means of subharmonic functions", Proc. Camb. Philos. Soc., {\bf 69}, 151--152  (1971).

\bibitem{FM}
P. Freitas, J.\,P. Matos, ``On the characterization of harmonic and subharmonic functions via mean-value properties",
Potential Anal., {\bf 32}:2, 189--200  (2010).

\bibitem{KhaRozKha19}
Khabibullin, B.N., Rozit,  A.P.,  Khabibullina, E.B.: Order versions of the Hahn\,--\,Banach theorem and envelopes. II. Applications to the function theory, (Russian). Complex Analysis. Mathematical Physics, Itogi Nauki i Tekhniki. Ser. Sovrem. Mat. Pril. Temat. Obz., {\bf 162}, VINITI, Moscow, 93--135 (2019); English transl. in Journal of Mathematical Sciences

\bibitem{BaiTalKha16}
 T.\,Yu.~Bayguskarov, G.\,R.~Talipova, B.\,N.~Khabibullin,
``Subsequences of zeros for classes of entire functions of exponential type, allocated by restrictions on their growth",
St. Petersburg Math. J., {\bf 28}:2, 127--151 (2017).

\bibitem{KhaLob}
B.\,N.~Khabibullin, Balayage of Measures with respect to (Sub-)Harmonic Functions, Lobachevskii Journal of Mathematics, {\bf 41}:11, 2179--2189  (2020).


\bibitem{KhaBai16}
  B.~N.~Khabibullin, T.~Yu.~Baiguskarov,  ``The Logarithm of the Modulus of a Holomorphic Function as a Minorant for a Subharmonic Function", Math. Notes, {\bf 99}:4, 576--589 (2016).

\bibitem{Hayman}
W. K. Hayman, ``Subharmonic functions", {\bf 2}, Academic Press, London (1990). 

\end{thebibliography}
\end{document}